\documentclass[a4paper,draft]{amsart}
\usepackage[a4paper,margin=25mm]{geometry}
\usepackage{amsmath}
\usepackage{amssymb,tikz,url}
\usepackage[backend=biber]{biblatex}

\newtheorem{lemma}{Lemma}
\newtheorem{theorem}{Theorem}

\DeclareMathOperator{\rht}{ht}
\DeclareMathOperator{\LIS}{LIS}

\begin{document}

\author[Manuel Kauers]{Manuel Kauers}
\address{Manuel Kauers, Institute for Algebra, J. Kepler University Linz, Austria}
\email{manuel.kauers@jku.at}

\author[Chen Wang]{Chen Wang}
\address{Chen Wang, Institute for Algebra, J. Kepler University Linz, Austria}
\email{wpolly0419@gmail.com}

\thanks{M. Kauers was supported by the Austrian FWF grants 10.55776/PAT8258123, 10.55776/PAT9952223,
  and 10.55776/I6130. C. Wang was supported by the Austrian FWF grant 10.55776/PAT9952223.}

\title{Recurrences for permutations with long increasing subsequences}

\begin{abstract}
  We prove two simple bivariate recurrences for the number of permutations with a long increasing subsequence.
  The two recurrences imply D-finiteness of the sequence in a certain range.
  As a consequence, we also obtain a proof of a conjecture posed by Kauers and Koutschan in 2023.
\end{abstract}

\maketitle

\section{Introduction}

Let
\[
a_{n,k}:=\#\{\sigma\in S_n\mid \LIS(\sigma)=k\},
\]
where $\LIS(\sigma)$ denotes the length of the longest increasing subsequence of $\sigma$. We use the convention that $a_{n,k}=0$ when $k<1$ or $k>n$. The resulting triangular array appears in the OEIS as A047874 and starts like
\begin{center}
  \begin{tabular}{ccccccc}
    1 \\
    1 & 1 \\
    1 & 4 & 1 \\
    1 & 13 & 9 & 1 \\
    1 & 41 & 61 & 16 & 1 \\
    1 & 131 & 381 & 181 & 25 & 1 \\
    1 & 428 & 2332 & 1821 & 421 & 31 & 1
  \end{tabular}.
\end{center}
Gessel~\cite{MR1041448} showed that every column of this array is D-finite, i.e., for every fixed~$k$, the
sequence $(a_{n,k})_{n=0}^\infty$ satisfies a linear recurrence with polynomial coefficients in~$n$.
For every fixed~$n$, the sequence $(a_{n,k})_{k=0}^\infty$ is finite and therefore trivially D-finite.
However, both results combined are not enough to conclude that $a_{n,k}$ is D-finite as a bivariate
sequence. Recall~\cite{Kauers23} that this requires a recurrence of the form
\[
  p_0(n,k)a_{n,k} + p_1(n,k)a_{n+1,k} + \cdots + p_r(n,k)a_{n+r,k} = 0
\]
with certain polynomials $p_0,\dots,p_r$, as well as a recurrence of the form
\[
  q_0(n,k)a_{n,k} + q_1(n,k)a_{n,k+1} + \cdots + q_s(n,k)a_{n,k+s} = 0
\]
with certain polynomials $q_0,\dots,q_s$. Rather than allowing each column (or row) to have its own
recurrence, there must be a uniform recurrence that works for all columns (or rows) simultaneously.

There is no reason to expect that such recurrences exist for the
sequence~$a_{n,k}$, and we believe that there are no such recurrences that hold
for all $n$ and~$k$. The main result of this note is that the bivariate sequence
does become D-finite if we restrict its domain to the sector $n/2 \leq k \leq n$.
Here and throughout the paper, we use the term ``D-finite inside a region~$R$''
in the sense of ``There exists a D-finite array that coincides with the original
array inside~$R$''.
\begin{theorem}\label{thm}
The two recurrences
\begin{equation}\label{eq:rec}
a_{n+1,k}+n^2a_{n-1,k} = a_{n,k+1}+(2n-1)a_{n,k}+a_{n,k-1}
\end{equation}
and
\begin{equation}\label{eq:rec2}
(n+1)^2(a_{n,k}+a_{n,k+1}) = (n-k+1)a_{n+1,k}+(n+k+1)a_{n+1,k+1}
\end{equation}
are valid in the region $n/2\leq k$.
\end{theorem}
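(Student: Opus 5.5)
The plan is to replace $a_{n,k}$ in the region $n/2\le k$ by an explicit sum over partitions, and then check both recurrences as identities between such sums.

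\textbf{Closed form.} By the Robinson--Schensted correspondence, $a_{n,k}=\sum_{\lambda\vdash n,\ \lambda_1=k} f_\lambda^2$, where $f_\lambda$ is the number of standard Young tableaux of shape~$\lambda$. Put $m=n-k$. When $k\ge n/2$ we have $m\le k$, so every $\mu\vdash m$ gives a valid shape $\lambda=(k,\mu)$. Hence the sum runs over all $\mu\vdash m$ with no further restriction.

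Write $\mu=(\mu_1,\dots,\mu_r)$ and use the Frobenius determinant formula with shifted parts $l_0=k+r$ and $l_i=\mu_i+r-i$ for $1\le i\le r$. The Vandermonde in the $l$'s factors as $\Delta(l_1,\dots,l_r)\prod_{i=1}^r(l_0-l_i)$. This gives
\[
f_{(k,\mu)}=\binom{n}{k}\,f_\mu\prod_{i=1}^{r}\frac{k+i-\mu_i}{k+i},
\qquad\text{hence}\qquad
a_{n,k}=\binom{n}{k}^2\sum_{\mu\vdash n-k} f_\mu^2\prod_{i=1}^{r}\Bigl(\frac{k+i-\mu_i}{k+i}\Bigr)^2 .
\]
This formula holds whenever $\mu_1\le k$ for all $\mu\vdash n-k$, so in particular whenever $k\ge n/2$.

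\textbf{Which terms appear.} Each term in \eqref{eq:rec} and \eqref{eq:rec2} is of the form $a_{n',k'}$ with $n'-k'\in\{m-1,m,m+1\}$ and $k'\ge k-1$. I must first check that the boundary entries still satisfy $k'\ge n'/2$. The delicate ones are $a_{n+1,k}$ and $a_{n,k-1}$ when $2k\in\{n,n+1\}$, where the formula may fail. There I would compare the closed form with the true value. The only offending shape is $\mu$ with $\mu_1=k+1$, and then the factor $k+1-\mu_1$ vanishes, so the closed form still gives the correct value. I expect this to work for exactly the extreme cases allowed by $n/2\le k$.

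\textbf{Verification via the Young lattice.} After dividing by the binomial prefactors, \eqref{eq:rec2} compares sums over $\mu\vdash m$ and $\nu\vdash m+1$, i.e. sums over pairs $\mu\lessdot\nu$. I will expand $f_\nu=\sum_{\mu\lessdot\nu}f_\mu$ and regroup. The product $\prod_i (k+i-\mu_i)/(k+i)$ changes by a simple rational factor when one box is added in row~$i$, because only the $i$-th factor changes, or one new factor appears. The recurrence should then reduce, for each fixed $\mu$, to an identity of the type
\[
\sum_{\nu\gtrdot\mu}\frac{f_\nu}{f_\mu}\,R(\mu,\nu,k)=\text{polynomial in }k,m,
\]
with $R$ rational. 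Such identities follow from the standard facts $\sum_{\nu\gtrdot\mu}f_\nu=(m+1)f_\mu$ and their content-weighted versions, e.g. $\sum_{\nu\gtrdot\mu}\frac{f_\nu}{(m+1)f_\mu}\,c(\nu/\mu)=0$, or more generally the partial-fraction identity for the ratios $f_\nu/f_\mu$ in terms of the interlacing corner coordinates. Recurrence \eqref{eq:rec} is handled the same way, but with two box moves. Its second-order structure $a_{n+1,k}+n^2a_{n-1,k}$ mirrors the relation $DU-UD=I$ on the Young lattice, restricted by the first-row condition. So I would organize it as a weighted up-down computation on $\mu$: adding then removing a box, versus removing then adding.

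\textbf{Main obstacle.} The hard step is the last one: turning \eqref{eq:rec}, and to a lesser extent \eqref{eq:rec2}, into a clean per-$\mu$ corner identity. I would attack it by expressing $f_\nu/f_\mu$ through the residue formula in the interlacing coordinates $x_j$ (inner corners) and $y_j$ (outer corners) of~$\mu$. The required sums then become sums of residues of a rational function in one variable $z$. The weight $\prod_i (k+i-\mu_i)/(k+i)$ corresponds to evaluating such a function at $z=k$, which explains why $k$ enters only polynomially. Checking that the total residue vanishes would prove both recurrences. As a safeguard, I would first confirm both identities numerically on the table for small $n$ in the range $n/2\le k$.
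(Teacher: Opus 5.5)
Your closed form $f_{(k,\mu)}=\binom{n}{k}f_\mu\Phi_\mu(k)$, with $\Phi_\mu(k)=\prod_i\frac{k+i-\mu_i}{k+i}$, is correct. The gap is in the per-$\mu$ identity, which carries the whole proof: you only announce it, and for \eqref{eq:rec} it is false in the form you propose. Push every sum to level $m=n-k$ via $f_\nu=\sum_{\mu\lessdot\nu}f_\mu$, and via $f_\lambda=\frac1m\sum_{\mu\gtrdot\lambda}f_\mu$ for level-$(m-1)$ terms. At $(n,k)=(5,3)$ the contributions of $\mu=(2)$ are then $153+25\cdot\frac92=265.5$ on the left of \eqref{eq:rec} and $8+9\cdot25+\frac{25}{2}=245.5$ on the right. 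For $\mu=(1,1)$ they are $340.5$ and $360.5$. Only the totals $606$ agree.

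Your residue picture shows why. Adding a box of content $c$ multiplies $\Phi_\mu(k)^2$ by $\bigl(\frac{k-c}{k+1-c}\bigr)^2$, which has a double pole at $z=k+1$. So the per-$\mu$ sums are not polynomial in $k$, contrary to your remark. They contain $\Phi_\mu(k)\Phi_\mu(k+1)$ and the Wronskian-type term $\Phi_\mu(k)\Phi_\mu'(k+1)-\Phi_\mu'(k)\Phi_\mu(k+1)$. The coefficients of the latter coming from $a_{n+1,k}$ and from $a_{n,k+1}$ do not match.

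For \eqref{eq:rec2} these terms do cancel, so your route can be completed there. You must, however, also treat the level-$(m-1)$ term $a_{n,k+1}$ with the box-removal measure, which your description omits.

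For \eqref{eq:rec} you are missing a global cancellation. The paper gets it by working linearly with the vectors $W_{k,m}$, whose coefficients are $f_{(k,\mu)}$, rather than with $f_\mu^2$. The branching identities $U_1W_{k,m}=W_{k,m+1}-W_{k-1,m+1}$ and $D_1W_{k,m}=nW_{k,m-1}-W_{k+1,m-1}$, together with adjointness, express $a_{n+1,k}-a_{n,k-1}$ and $n^2a_{n-1,k}-a_{n,k+1}$ as inner products with $W_{k,m}$. The mixed terms $\langle W_{k,m},W_{k\pm1,m}\rangle$ then cancel on adding. For \eqref{eq:rec2} the paper instead uses a Parseval identity from Murnaghan--Nakayama together with $D_sW_{k,m}=-W_{k+s,m-s}$ for $s\ge2$.

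Your boundary check is also wrong. For $a_{n,k-1}$ with $n=2k$ the relevant factor is $(k-1)+1-\mu_1=k-\mu_1$. It kills $\mu=(k,1)$ but not $\mu=(k+1)$, so the closed form exceeds $a_{2k,k-1}$ by $\binom{2k}{k-1}^2/k^2=f_{(k,k)}^2$.

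This cannot be repaired, because \eqref{eq:rec} genuinely fails on the line $n=2k$. At $(n,k)=(2,1)$ the left side is $a_{3,1}+4a_{1,1}=5$ while the right side is $a_{2,2}+3a_{2,1}+a_{2,0}=4$. At $(4,2)$ one gets $105$ versus $101$. The discrepancy is $f_{(k,k)}^2$ each time.

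Consistently with this, the paper's proof of \eqref{eq:rec} only covers $1\le m<k$ and $m=0$, that is, $k>n/2$. By contrast, \eqref{eq:rec2} does hold for all $m\le k$.
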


The recurrences announced in Theorem~\ref{thm} and proved below both involve shifts with respect
to both $n$ and~$k$. Recurrences involving only shifts in one of the directions, as required by the
definition of D-finiteness, can be easily obtained from them by computer algebra. For example,
if \texttt{basis} denotes a list containing the two operators corresponding to the two recurrences
stated in the theorem, then typing
\begin{verbatim}
    FindRelation[OreGroebnerBasis[basis], Support -> Table[S[n]^i, {i, 0, 4}]]
\end{verbatim}
and
\begin{verbatim}
    FindRelation[OreGroebnerBasis[basis], Support -> Table[S[k]^i, {i, 0, 4}]]
\end{verbatim}
in Koutschan's Mathematica package \texttt{HolonomicFunctions}~\cite{KoutschanHolonomicFunctions}
produces the desired recurrences effortlessly. We do not reproduce them here.

Similarly, we obtain a proof of Conjecture~17 in~\cite{2303.02793} as a corollary of Theorem~\ref{thm}.
Setting
\[
b_{n,k}:=\#\{\sigma\in S_n\mid \LIS(\sigma)\geq k\}=\sum_{i=k}^{\infty}a_{n,i},
\]
Conjecture~17 concerns the sequence $b_{2n,n}$, whose $n$th term is the number of permutations of length $2n$ containing an increasing subsequence of length~$n$.
This sequence appears as A269021 in the OEIS, and based on the terms given there, Kauers and Koutschan
conjectured that $b_{2n,n}$ is D-finite, satisfying a recurrence of order~4.

Using Koutschan's package, a recurrence for $b_{2n,n}$ can be derived from Theorem~\ref{thm} by typing
the following commands:
\begin{verbatim}
    G := OreGroebnerBasis[basis];
    Q := CreativeTelescoping[G, S[k]-1, {S[n]}][[2,1]];
    B := DFiniteOreAction[G, Q];
    FindRelation[B, Support -> Table[S[n]^(2i)S[k]^i, {i,0,4}]]
\end{verbatim}
Up to normalization, this produces precisely the recurrence stated in Conjecture~17 of~\cite{2303.02793}.

Our proof of Theorem~\ref{thm} is based on the Robinson--Schensted
correspondence. It expresses $a_{n,k}$ as a sum of squares of dimensions of
irreducible representations indexed by partitions with first row~$k$. In the
range $k\geq n/2$, these partitions have a sufficiently long first row that the
ordinary branching operators and the Murnaghan--Nakayama rim-hook operators take
a particularly simple form. Combining these relations with character
orthogonality will give \eqref{eq:rec} and~\eqref{eq:rec2}.

\section{Large scale calculation of $a_{n,k}$}

The recurrences in \eqref{eq:rec} and \eqref{eq:rec2} were found using guessing~\cite{KauersGuess} from the
first few sequence terms. 
In order to efficiently compute enough terms of the array $a_{n,k}$ to find these recurrences, we use Gessel's
determinantal formula \cite[Section 7]{MR1041448} for the cumulative counts
\[
u_k(n):=a_{n,1}+\dots+a_{n,k}.
\]

It states that (after suitable transformation of the original formula)
\begin{equation}\label{eq:gessel}
\sum_{n\geq0}u_k(n)\frac{x^{n}}{(n!)^2}=\det\left(x^{(i-j)/2}I_{i-j}(2\sqrt{x})\right)_{i,j=1,\ldots,k},
\end{equation}
where $I_\nu(z)$ is the modified Bessel function of the first kind, and the matrix entries have nice series expansions
\[
x^{s/2}I_{s}(2\sqrt{x}) = \begin{cases}
    x^s\sum_{m\geq0} \frac{x^{m}}{m!(m+s)!} & s\geq 0,\\
    \sum_{m\geq0} \frac{x^{m}}{m!(m-s)!} & s<0.
\end{cases}
\]

To calculate $u_k(n)$ for $n,k$ up to $N$ and $K$ utilizing \eqref{eq:gessel} requires computing the leading principal minors of a $K\times K$ matrix whose entries are truncated power series in $x$ of degree $N$. This is done by ordinary Gaussian elimination without pivoting. In order to prevent large integer arithmetic, we do the calculations modulo many primes (with each prime larger than $N+K$ and their product larger than $N!$) and then use the Chinese remainder theorem to recover $u_k(n)$. With this approach and naive truncated power series arithmetic, we can compute $u_k(n)$ for $n,k$ up to $N$ and $K$ in time $O(N^2K^3)$ per prime (and therefore $O(N^3K^3\log N)$ in total using fixed-word-size primes). Finally, we recover $a_{n,k}$ from $u_k(n)$ by taking differences.

The actual computation for $N=300, K=200$ took about 1 hour on an i5-12400 desktop computer, and produced enough data to guess the recurrences \eqref{eq:rec} and \eqref{eq:rec2}. For further investigation of the array $a_{n,k}$ (including guessing more complicated recurrences), we have also computed $u_k(n) \mod 2^{30}+3$ for $n,k$ up to $N=K=1200$, which took about 1 day on the same computer.

It turns out that on the other side of the diagonal $k=n/2$, the array $a_{n,k}$ also satisfies bivariate recurrences, but they are more complicated. Experimentally we have found recurrences with higher order and degree for the region $n/3<k<n/2$, the shortest among them is
\begin{align*}
    &(n+1)^2(n+2)^2\left(\left(-2 k^2+k+3\right) a_{n,k}-\left(2 k^2+k-3\right) a_{n,k+1}\right.\\
    &\qquad\left.+(2 k+1) a_{n,k-1}+(1-2 k) a_{n,k+2}\right)\\
    &+(n+2)^2\left((-6+k+5k^2-2k^3-6n-kn+6k^2n)a_{n+1,k}\right.\\
    &\qquad+(-6-k+5k^2+2k^3-6n+kn+6k^2n)a_{n+1,k+1}\\
    &\qquad\left. -(1+2k)(4-3k+2n)a_{n+1,k-1}-(1-2k)(4+3k+2n)a_{n+1,k+2}\right)\\
    &+\left((4n-2)k^3-(n^2+8n+7)k\right)\left(a_{n+2,k}-a_{n+2,k+1}\right)\\
    &+\left(-(6n^2+22n+19)k^2+(3n^2+9n+6)\right)\left(a_{n+2,k}+a_{n+2,k+1}\right)\\
    &+(2 k+1) (k-n-3) (2 k-n-2) a_{n+2,k-1}\\
    &-(2 k-1) (k+n+3) (2 k+n+2) a_{n+2,k+2}\\
    &+k (2 k-1) (k+n+3) a_{n+3,k+1}-k (2 k+1) (k-n-3) a_{n+3,k}=0,
\end{align*}
\begin{figure}
  \begin{center}
  \begin{tikzpicture}[scale=.25]
    \fill[gray](0,0)--(12,-12)--(12/2,-12)--cycle;
    \fill[lightgray](0,0)--(12/2,-12)--(12/3,-12)--cycle;
    \fill[white](0,0)rectangle(1,-3);
    \draw[<->](0,-12)node[below]{$n$}--(0,0)--(12,0)node[right]{$k$};
    \foreach\row in{1,...,12}
    \foreach\col in{1,...,\row}
    \draw(\col,-\row)node{$\cdot$};
    \footnotesize
    \draw(1,-12)node[below]{$1$}(12/3,-12)node[below]{$\frac n3$}(12/2,-12)node[below]{$\frac n2$}(12,-12)node[below]{$n$};
  \end{tikzpicture}\hfil
  \parbox[b]{.3\hsize}{\raggedright\small
    \textbf{Figure 1.}
    The triangular array $a_{n,k}$ is proven to be D-finite in the region
    marked in dark gray.
    In addition, we have a system of guessed recurrences which appear to
    hold for the sequence in the light gray region.

    \bigskip
    }
  \end{center}  
\end{figure}

Interestingly enough, these complicated recurrences also appear to hold for the
region $k\geq n/2$, and we have checked by direct reduction that they lie in the
(left) ideal generated by \eqref{eq:rec} and \eqref{eq:rec2}. We suspect that
for every $r=2,3,\dots$, the sequence $a_{n,k}$ is D-finite in the range $k\geq n/r$.
Note that even if this is true, it would not imply that $a_{n,k}$ is D-finite on
the whole range $k\geq0$.

Direct combinatorial proofs for the recurrences we found for $r=3$ are likely to be
very hard. One hopes that a uniform approach can be found to deal with all such
slices of the array $a_{n,k}$ and understand the resulting chain of ideals.
For the time being, we can only prove \eqref{eq:rec} and~\eqref{eq:rec2}.
The rest of the paper is dedicated to this proof. 

\section{Proof of Theorem~\ref{thm}}

Most of the standard notation and terminology of the representation theory of symmetric groups can be found in \cite{SaganSymmetricGroup} or \cite{StanleyEC2}.

By the Robinson--Schensted correspondence,
\begin{equation}\label{eq:aRSK}
 a_{n,k}
 =\sum_{\substack{\lambda\vdash n\\ \lambda_1=k}}(f^\lambda)^2
 =\sum_{\substack{\nu\vdash n-k\\\nu_1\leq k}}
    (f^{(k,\nu)})^2.
\end{equation}

We first define some auxiliary objects.  Let $x_\lambda$ be formal
basis vectors indexed by integer partitions, and define the real vector
space
\[
P_m:=\operatorname{span}\{x_\lambda\mid\lambda\vdash m\}
\]
for every $m\geq0$, equipped with the standard inner product.

For every positive integer $s$, let $D_s$ be the rim-hook removal
operator
\[
D_sx_\lambda:=
\sum_{\substack{\mu\vdash |\lambda|-s\\
\lambda\setminus\mu\text{ is a rim hook}}}
(-1)^{\rht(\lambda\setminus\mu)}x_\mu,
\]
and let its adjoint $U_s=D_s^*$ be the rim-hook addition operator
\[
U_sx_\mu:=
\sum_{\substack{\lambda\vdash |\mu|+s\\
\lambda\setminus\mu\text{ is a rim hook}}}
(-1)^{\rht(\lambda\setminus\mu)}x_\lambda.
\]


We also define the ``regular representation'' element
\[
R_N:=\sum_{\lambda\vdash N}f^\lambda x_\lambda\in P_N.
\]
The standard character restriction and induction relationship on symmetric groups can be represented succinctly by
\[
D_1R_{n+1}=(n+1)R_n,\qquad U_1R_n=R_{n+1}.
\]

For $l,m\geq0$, let
\[
W_{l,m}:=
\sum_{\substack{\lambda\vdash m\\\lambda_1\leq l}} f^{(l,\lambda)}x_\lambda\in P_m.
\]
Then \eqref{eq:aRSK} says that
\begin{equation}\label{eq:aW}
   a_{n,k}=\|W_{k,n-k}\|^2.
\end{equation}

\begin{lemma}\label{lem:parseval}
For every $V\in P_m$,
\begin{equation}\label{eq:parseval}
    \sum_{s=1}^m\|D_sV\|^2=m\|V\|^2.
\end{equation}
\end{lemma}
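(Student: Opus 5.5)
Since both sides of \eqref{eq:parseval} are quadratic forms in $V$, it suffices to show the operator identity
\[
\sum_{s=1}^m U_sD_s = m\cdot\mathrm{id}\quad\text{on } P_m .
\]
Then $\sum_s\|D_sV\|^2=\sum_s\langle U_sD_sV,V\rangle=m\|V\|^2$.

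I will prove this identity through the Murnaghan--Nakayama rule and character orthogonality. Identify $x_\lambda$ with the irreducible character $\chi^\lambda$ of $S_m$. This gives an isometry from $P_m$ (standard inner product) to class functions with the usual inner product $\langle\phi,\psi\rangle=\frac1{m!}\sum_{\sigma}\phi(\sigma)\psi(\sigma)$, since the irreducible characters are orthonormal and real. The Murnaghan--Nakayama rule says that for $\sigma\in S_{m-s}$ and an $s$-cycle $c$ on the remaining letters,
\[
\chi^\lambda(\sigma c)=\sum_{\mu}(-1)^{\rht(\lambda\setminus\mu)}\chi^\mu(\sigma)=(D_s\chi^\lambda)(\sigma).
\]
So $D_sV$ corresponds to the class function $\sigma\mapsto V(\sigma c_s)$ on $S_{m-s}$.

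With this, $\|D_sV\|^2=\frac{1}{(m-s)!}\sum_{\sigma\in S_{m-s}}V(\sigma c_s)^2$. The key step is to rewrite $m\|V\|^2=\frac{m}{m!}\sum_{\pi\in S_m}V(\pi)^2$ by grouping permutations $\pi$ according to the length $s$ of the cycle containing the letter $m$. For each $s$, such $\pi$ are determined by two choices: the $s-1$ other letters of that cycle together with their cyclic arrangement, which gives $(m-1)!/(m-s)!$ possibilities, and an arbitrary permutation of the remaining $m-s$ letters. Since $V$ is a class function, relabelling sends each such $\pi$ to a conjugate of $\sigma c_s$ with $\sigma\in S_{m-s}$. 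Hence
\[
\sum_{\pi\in S_m}V(\pi)^2=\sum_{s=1}^m\frac{(m-1)!}{(m-s)!}\sum_{\sigma\in S_{m-s}}V(\sigma c_s)^2 .
\]
Multiplying by $m/m!=1/(m-1)!$ yields exactly $\sum_s\|D_sV\|^2$.

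The main point needing care is the counting and the class-function invariance in this grouping step. The sign conventions for $\rht$ cause no trouble, because they appear only through the Murnaghan--Nakayama rule. I would also check the identity for $V=R_m$, where it should reduce to the fact that the probability that $m$ lies in a cycle of length $s$ is $1/m$.
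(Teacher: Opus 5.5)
Your proof is correct and is essentially the paper's argument. You identify $V$ with a class function, use Murnaghan--Nakayama to write $\|D_sV\|^2$ as an average of $V(\sigma c_s)^2$, and group the permutations of $S_m$ by the length of the cycle containing $m$, using the count $(m-1)!/(m-s)!$. (Two minor remarks: the opening reduction to $\sum_s U_sD_s=m\cdot\mathrm{id}$ is true but unnecessary, since you then prove the quadratic-form identity directly; and your closing sanity check fits $V=x_{(m)}$ rather than $R_m$, because for $R_m$ only the identity permutation contributes.)
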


\begin{proof}
We write
\[
V=\sum_{\lambda\vdash m}v_\lambda x_{\lambda}.
\]
Standard character orthogonality implies
\begin{equation}\label{eq:vnorm}
\|V\|^2=\frac{1}{m!}\sum_{\sigma\in S_m}\left(\sum_{\lambda\vdash m}v_\lambda\chi_{\lambda}(\sigma)\right)^2.
\end{equation}

On the other hand, the Murnaghan--Nakayama rule gives
\begin{equation*}
    D_sV=\sum_{\lambda\vdash m}v_\lambda\sum_{\substack{\mu\vdash m-s\\
\lambda\setminus\mu\text{ is a rim hook}}}
(-1)^{\rht(\lambda\setminus\mu)}x_\mu,
\end{equation*}
and therefore
\begin{align}
    \|D_sV\|^2
    &=\frac{1}{(m-s)!}\sum_{\tau\in S_{m-s}}
      \left(\sum_{\substack{\lambda\vdash m\\ \mu\vdash m-s\\
      \lambda\setminus\mu\text{ is a rim hook}}}
      (-1)^{\rht(\lambda\setminus\mu)}v_\lambda\chi_\mu(\tau)\right)^2\nonumber\\
    &=\frac{1}{(m-s)!}\sum_{\tau\in S_{m-s}}
      \left(\sum_{\lambda\vdash m}v_\lambda\chi_\lambda(c_s\tau)\right)^2,
      \label{eq:dvnorm}
\end{align}
where $c_s$ is an $s$-cycle disjoint from $\tau$.

Combining \eqref{eq:vnorm} and \eqref{eq:dvnorm}, we see that
\eqref{eq:parseval} is equivalent to
\begin{equation*}
    \sum_{\sigma\in S_m}\left(\sum_{\lambda\vdash m}v_\lambda\chi_{\lambda}(\sigma)\right)^2
    =\sum_{s=1}^{m}\frac{(m-1)!}{(m-s)!}\sum_{\tau\in S_{m-s}}
      \left(\sum_{\lambda\vdash m}v_\lambda\chi_\lambda(c_s\tau)\right)^2.
\end{equation*}
This last identity can be proved bijectively.  Let $S_m$ act on
$\{1,2,\dots,m\}$.  There are exactly
\[
 \binom{m-1}{s-1}(s-1)! = \frac{(m-1)!}{(m-s)!}
\]
possible $s$-cycles containing $m$, and every permutation is uniquely
the product of its cycle containing $m$ and a permutation on the
remaining points.
\end{proof}

\begin{lemma}\label{lem:regular}
For $n\geq s\geq2$, one has $D_sR_n=0$.
\end{lemma}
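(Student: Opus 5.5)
The plan is to read $D_sR_n$ through the character-theoretic interpretation already used in the proof of Lemma~\ref{lem:parseval}. By the Murnaghan--Nakayama rule, for $\mu\vdash n-s$ the coefficient of $x_\mu$ in $D_sR_n$ is
\[
\langle D_sR_n,x_\mu\rangle=\sum_{\lambda\vdash n}f^\lambda\sum_{\substack{\lambda\setminus\mu\text{ rim hook}}}(-1)^{\rht(\lambda\setminus\mu)}=\langle R_n,U_sx_\mu\rangle .
\]
I will evaluate this by pairing with characters: it suffices to show that for every $\tau\in S_{n-s}$,
\[
\sum_{\mu\vdash n-s}\langle D_sR_n,x_\mu\rangle\chi_\mu(\tau)=\sum_{\lambda\vdash n}f^\lambda\chi_\lambda(c_s\tau)=0,
\]
since the characters $\chi_\mu$ of $S_{n-s}$ are linearly independent as class functions. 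Here $c_s$ is an $s$-cycle disjoint from $\tau$, exactly as in \eqref{eq:dvnorm}, and the equality is the same Murnaghan--Nakayama manipulation used there.

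The key step is then the standard fact that $\sum_{\lambda\vdash n}f^\lambda\chi_\lambda$ is the character of the regular representation of $S_n$. That character equals $n!$ at the identity and $0$ at every other element. Because $s\geq2$, the permutation $c_s\tau$ contains a nontrivial cycle and is not the identity, so the sum vanishes. Hence every coefficient of $D_sR_n$ is zero. Equivalently, one can argue directly through the norm: $\|D_sR_n\|^2$ equals $\frac{1}{(n-s)!}\sum_\tau\bigl(\sum_\lambda f^\lambda\chi_\lambda(c_s\tau)\bigr)^2=0$, by the identity \eqref{eq:dvnorm} applied with $v_\lambda=f^\lambda$.

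The norm version is the cleaner route, since it reuses \eqref{eq:dvnorm} verbatim and needs no separate linear-independence argument. I will present that one, noting that \eqref{eq:dvnorm} was derived for an arbitrary $V\in P_m$. The only point needing care is the Murnaghan--Nakayama step $\sum_\lambda v_\lambda\chi_\lambda(c_s\tau)=\sum_{\lambda,\mu}(-1)^{\rht}v_\lambda\chi_\mu(\tau)$, and this is already used in the paper, so I expect no real obstacle. As a sanity check, the case $s=1$ gives $\chi_{\mathrm{reg}}(\tau)$ instead, which is consistent with $D_1R_n=nR_{n-1}$.
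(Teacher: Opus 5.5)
Your proposal is correct and is essentially the paper's proof. The Murnaghan--Nakayama rule turns the $x_\mu$-coefficients of $D_sR_n$ into an expansion of the regular character at $c_s\tau$, which vanishes because $s\geq2$; the paper concludes by linear independence of the $\chi_\mu$, and your preferred norm version, \eqref{eq:dvnorm} with $v_\lambda=f^\lambda$ plus positive-definiteness, is the same character-orthogonality argument in slightly different packaging.
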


\begin{proof}
Let $\varphi:S_s\times S_{n-s}\to S_n$ be the natural embedding, and
let $\pi$ be an $s$-cycle in $S_s$.  Since the character of the regular
representation is zero on every non-identity element, for every
$\sigma\in S_{n-s}$ we have
\begin{align*}
    0&=\sum_{\lambda\vdash n}f^\lambda
       \chi_{\lambda}(\varphi(\pi,\sigma))\\
    &=\sum_{\lambda\vdash n}f^\lambda
      \sum_{\substack{\mu\vdash n-s\\
      \lambda\setminus\mu\text{ is a rim hook}}}
      (-1)^{\rht(\lambda\setminus\mu)}\chi_{\mu}(\sigma)\\
    &=\sum_{\mu\vdash n-s}\chi_{\mu}(\sigma)
      \sum_{\substack{\lambda\vdash n\\
      \lambda\setminus\mu\text{ is a rim hook}}}
      (-1)^{\rht(\lambda\setminus\mu)}f^\lambda.
\end{align*}
Since the irreducible characters of $S_{n-s}$ are linearly independent
as class functions, it follows that
\[
\sum_{\substack{\lambda\vdash n\\
\lambda\setminus\mu\text{ is a rim hook}}}
(-1)^{\rht(\lambda\setminus\mu)}f^\lambda=0,
\qquad \mu\vdash n-s.
\]
This is exactly the $x_\mu$ coefficient of $D_sR_n$.
\end{proof}

In the long-first-row regime, Lemma~\ref{lem:regular} has a
particularly simple consequence.

\begin{lemma}\label{lem:Ds}
If $m\leq k+1$ and $2\leq s\leq m$, then
\begin{equation}\label{eq:Ds1}
    D_sW_{k,m}=-W_{k+s,m-s}.
\end{equation}
\end{lemma}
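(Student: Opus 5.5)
The plan is to read off the identity coefficient by coefficient from Lemma~\ref{lem:regular}, applied to $R_n$ with $n=k+m$. Fix $\mu\vdash m-s$. By definition of $W_{k,m}$ and $D_s$, the $x_\mu$-coefficient of $D_sW_{k,m}$ is
\[
\sum_{\substack{\lambda\vdash m,\ \lambda_1\le k\\ \lambda\setminus\mu\text{ rim hook}}}(-1)^{\rht(\lambda\setminus\mu)}f^{(k,\lambda)}.
\]
The $x_\mu$-coefficient of $W_{k+s,m-s}$ is $f^{(k+s,\mu)}$. Here the condition $\mu_1\le k+s$ holds automatically, since $\mu_1\le m-s\le k+1-s<k+s$. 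So it suffices to show that these two quantities add up to zero.

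Lemma~\ref{lem:regular} applies since $n=k+m\ge s\ge 2$. From its proof we know that for every $\mu'\vdash n-s$,
\[
\sum_{\lambda'\setminus\mu'\text{ rim hook}}(-1)^{\rht(\lambda'\setminus\mu')}f^{\lambda'}=0 .
\]
Take $\mu'=(k,\mu)$. This is a partition because $\mu_1\le k-1$. We then classify all rim hooks $\lambda'\setminus(k,\mu)$ of size $s$.

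\emph{Case 1: the hook does not touch row~1.} Then $\lambda'=(k,\lambda)$ with $\lambda\setminus\mu$ a rim hook of the same height. For $\lambda'$ to be a partition we need $\lambda_1\le k$. This is exactly the index set and the sign appearing in the first sum above. The only $\lambda\vdash m$ with $\lambda\setminus\mu$ a rim hook and $\lambda_1>k$ is possible when $m=k+1$ and $\lambda=(k+1)$. Such a $\lambda$ is excluded on both sides: from $W_{k,m}$ by definition, and from the sum over $\lambda'$ because $(k,k+1)$ is not a partition.

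\emph{Case 2: the hook contains cells of row~1.} Those cells lie in columns $k+1,\dots,\lambda'_1$. This is the main point to check: such a hook must be purely horizontal. Suppose it also contained cells in row~2. By connectivity, some added cell of row~2 would have to lie in a column $\ge k+1$, so $\lambda'_2\ge k+1$. That forces at least $k+1-\mu_1$ added cells in row~2 and at least one in row~1. Hence
\[
s\ge k+2-\mu_1\ge k+2-(m-s),
\]
that is, $m\ge k+2$, which contradicts $m\le k+1$. So the only rim hook of this type is $\lambda'=(k+s,\mu)$. It has height $0$ and contributes $+f^{(k+s,\mu)}$.

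Adding the two cases, the identity from Lemma~\ref{lem:regular} reads: ($x_\mu$-coefficient of $D_sW_{k,m}$) $+\,f^{(k+s,\mu)}=0$. This holds for every $\mu\vdash m-s$, which gives \eqref{eq:Ds1}. The only delicate step is the geometric argument in Case~2, and it is precisely where the hypothesis $m\le k+1$ enters.
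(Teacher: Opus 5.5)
Your proof is correct and is essentially the paper's argument: you read off the $x_{(k,\mu)}$-coefficient of $D_sR_{k+m}=0$ from Lemma~\ref{lem:regular}, split the added rim hooks into those lying below the first row (which give $[x_\mu]D_sW_{k,m}$) and those meeting the first row, and rule out any non-horizontal hook of the second kind because row~2 would have to reach column $k+1$, while the rows below the first contain at most $m-1\le k$ cells. Your count $s\ge k+2-\mu_1$ with $\mu_1\le m-s$ is just a more explicit version of the paper's one-line connectivity argument.
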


\begin{proof}
Fix $\lambda\vdash m-s$.  Since $s\geq2$, we have
$|\lambda|\leq k-1$, so $(k,\lambda)$ is a partition.  In the
coefficient of $x_{(k,\lambda)}$ in $D_sR_{k+m}$, an added rim hook
is either the horizontal hook producing $(k+s,\lambda)$ or lies
entirely below the first row and produces some $(k,\mu)$ occurring in
$W_{k,m}$.  A hook meeting both parts is impossible: once at least
one new cell is put in the first row, the lower part has at most $m-1\leq k$
cells and therefore cannot reach column $k+1$ to connect to it.
Consequently, the coefficient $[x_{(k,\lambda)}]D_sR_{k+m}$ (that vanishes because of Lemma~\ref{lem:regular}) is $f^{(k+s,\lambda)}$ plus the coefficient of $x_\lambda$ in $D_sW_{k,m}$, proving \eqref{eq:Ds1}.
\end{proof}

We shall also need the ordinary one-box branching relations in the
same notation.

\begin{lemma}\label{lem:D1U1}
For $l\geq1$,
\begin{align}
D_1W_{l,m}&=(l+m)W_{l,m-1}-W_{l+1,m-1},
&&1\leq m\leq l+1, \label{eq:D1W}\\
U_1W_{l,m}&=W_{l,m+1}-W_{l-1,m+1},
&&0\leq m<l. \label{eq:U1W}
\end{align}
\end{lemma}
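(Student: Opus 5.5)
Both identities are coefficientwise restatements of the one-box branching rule $D_1R_{N+1}=(N+1)R_N$ and $U_1R_N=R_{N+1}$, applied to the partition $(l,\lambda)$. In coordinates these read
\[
\sum_{\nu=\mu+\square}f^\nu=(|\mu|+1)f^\mu,\qquad f^\nu=\sum_{\mu=\nu-\square}f^\mu,
\]
for every partition $\mu$ (respectively $\nu$). The plan is to compare the $x_\lambda$-coefficients on both sides, splitting the added or removed box according to whether it lies in the first row or strictly below it. The hypotheses on $m$ guarantee that the first row, of length $l$, is always at least as long as any row below it. This is the same mechanism as in Lemma~\ref{lem:Ds}, only simpler.

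For \eqref{eq:D1W}, fix $\lambda\vdash m-1$. Since $m-1\leq l$, we have $\lambda_1\leq l$, so $(l,\lambda)\vdash l+m-1$ is a partition. By definition, $[x_\lambda]D_1W_{l,m}=\sum f^{(l,\nu)}$, where the sum runs over $\nu=\lambda+\square$ with $\nu_1\leq l$. Now apply the first branching identity to $\mu=(l,\lambda)$. The partitions obtained by adding one box are $(l+1,\lambda)$, when the box goes into the first row, and $(l,\nu)$ with $\nu=\lambda+\square$ and $\nu_1\leq l$, when the box goes below. The latter condition is exactly the condition for the box to be addable below the first row of $(l,\lambda)$. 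Hence
\[
[x_\lambda]D_1W_{l,m}=(l+m)f^{(l,\lambda)}-f^{(l+1,\lambda)}.
\]
This equals $[x_\lambda]\bigl((l+m)W_{l,m-1}-W_{l+1,m-1}\bigr)$, because $\lambda_1\leq l$ lies in the index ranges of both $W$'s.

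For \eqref{eq:U1W}, fix $\nu\vdash m+1$. Since $m+1\leq l$, we have $\nu_1\leq l$, so $(l,\nu)$ is a partition, and by definition $[x_\nu]U_1W_{l,m}=\sum_{\lambda=\nu-\square}f^{(l,\lambda)}$. The constraint $\lambda_1\leq l$ is automatic here. Applying the second branching identity to $(l,\nu)$, removing a box from below the first row gives exactly these terms. Removing the box from the first row is possible precisely when $\nu_1\leq l-1$, and then it gives $(l-1,\nu)$. Therefore
\[
[x_\nu]U_1W_{l,m}=f^{(l,\nu)}-[\nu_1\leq l-1]\,f^{(l-1,\nu)}=[x_\nu]\bigl(W_{l,m+1}-W_{l-1,m+1}\bigr).
\]

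The only delicate point is bookkeeping at the boundaries. We must check that $\lambda_1=l$ or $\nu_1=l$ is handled correctly by the summation ranges in the definition of $W$. We must also confirm the degenerate case $l=1$, $m=0$, where $W_{0,1}=0$ holds because its defining sum is empty. Finally, the hypotheses $m\leq l+1$ and $m<l$ are exactly what exclude a lower row longer than the first row.
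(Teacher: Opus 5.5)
Your proof is correct and follows essentially the same route as the paper. Both compare coefficients using the one-box branching rule for the shape $(l,\lambda)$, split according to whether the added or removed box lies in the first row, and your indicator $[\nu_1\le l-1]$ is exactly the paper's case distinction between $\mu_1<l$ and $\mu_1=l$, where the coefficient of $x_\mu$ in $W_{l-1,m+1}$ vanishes.
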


\begin{proof}
Fix $\nu\vdash m-1$.  Under the assumption $m\leq l+1$, we have
$\nu_1\leq l$, and the coefficient of $x_\nu$ in $D_1W_{l,m}$ is the
sum of $f^{(l,\lambda)}$ over all ways of adding one box below the
first row of $(l,\nu)$.  The ordinary branching rule gives
\[
(l+m)f^{(l,\nu)}
=f^{(l+1,\nu)}+
\sum_{\substack{\lambda\vdash m,\ \lambda_1\leq l\\\lambda\setminus\nu
\text{ is one box}}}f^{(l,\lambda)},
\]
which proves \eqref{eq:D1W} coefficientwise.

For \eqref{eq:U1W}, fix $\mu\vdash m+1$.  Since $m<l$, one has
$\mu_1\leq l$.  The coefficient of $x_\mu$ in $U_1W_{l,m}$ is the
sum of $f^{(l,\lambda)}$ over the corners of $\mu$ whose removal gives
$\lambda$.  These are exactly the removals below the first row of the
shape $(l,\mu)$.  If $\mu_1<l$, the remaining first-row removal
contributes $f^{(l-1,\mu)}$; if $\mu_1=l$, that box is not a corner
and the coefficient of $x_\mu$ in $W_{l-1,m+1}$ is zero.  In either
case the branching rule gives precisely the coefficient of
$W_{l,m+1}-W_{l-1,m+1}$.
\end{proof}

\begin{proof}[Proof of \eqref{eq:rec}]
Put $m=n-k$.  First suppose that $1\leq m<k$.  From
\eqref{eq:U1W}, adjointness, and then \eqref{eq:D1W},
\begin{align*}
&\|W_{k,m+1}\|^2-\|W_{k-1,m+1}\|^2\\
&\quad=\langle U_1W_{k,m},
       W_{k,m+1}+W_{k-1,m+1}\rangle\\
&\quad=\langle W_{k,m},
       D_1W_{k,m+1}+D_1W_{k-1,m+1}\rangle\\
&\quad=\big\langle W_{k,m},
       nW_{k,m}+nW_{k-1,m}-W_{k+1,m}\big\rangle.
\end{align*}
Similarly,
\begin{align*}
&n^2\|W_{k,m-1}\|^2-\|W_{k+1,m-1}\|^2\\
&\quad=\langle D_1W_{k,m},
       nW_{k,m-1}+W_{k+1,m-1}\rangle\\
&\quad=\left\langle W_{k,m},
       U_1\bigl(nW_{k,m-1}+W_{k+1,m-1}\bigr)\right\rangle\\
&\quad=\big\langle W_{k,m},
       (n-1)W_{k,m}-nW_{k-1,m}+W_{k+1,m}\big\rangle.
\end{align*}
Adding the two identities cancels the mixed inner products and gives
\begin{align*}
&\|W_{k,m+1}\|^2+n^2\|W_{k,m-1}\|^2\\
&\qquad=\|W_{k+1,m-1}\|^2+(2n-1)\|W_{k,m}\|^2
       +\|W_{k-1,m+1}\|^2.
\end{align*}
By \eqref{eq:aW}, this is \eqref{eq:rec}.

The remaining case $m=0$, or $k=n$, is immediate from
$a_{n,n}=1$, $a_{n+1,n}=n^2$, and $a_{n,n-1}=(n-1)^2$.
\end{proof}

\begin{proof}[Proof of \eqref{eq:rec2}]
Again put $m=n-k$.  The case $m=0$ is immediate from
$a_{n,n}=a_{n+1,n+1}=1$ and $a_{n+1,n}=n^2$, so assume
$1\leq m\leq k$.

Apply Lemma~\ref{lem:parseval} and Lemma~\ref{lem:Ds} first to
$W_{k,m+1}$ and then to $W_{k+1,m}$.  We obtain
\begin{align*}
(m+1)\|W_{k,m+1}\|^2
 &=\|D_1W_{k,m+1}\|^2
   +\sum_{s=2}^{m+1}\|W_{k+s,m+1-s}\|^2,\\
m\|W_{k+1,m}\|^2
 &=\|D_1W_{k+1,m}\|^2
   +\sum_{s=3}^{m+1}\|W_{k+s,m+1-s}\|^2.
\end{align*}
After subtraction,
\begin{align}\label{eq:parseval-difference}
&(m+1)\|W_{k,m+1}\|^2-m\|W_{k+1,m}\|^2\nonumber\\
&\qquad=\|D_1W_{k,m+1}\|^2-\|D_1W_{k+1,m}\|^2
       +\|W_{k+2,m-1}\|^2.
\end{align}
Using \eqref{eq:D1W} twice, the right-hand side of
\eqref{eq:parseval-difference} is
\begin{align*}
&(n+1)^2\bigl(\|W_{k,m}\|^2-\|W_{k+1,m-1}\|^2\bigr)
 +\|W_{k+1,m}\|^2\\
&\quad+2(n+1)\bigl(
 \langle W_{k+1,m-1},W_{k+2,m-1}\rangle
 -\langle W_{k,m},W_{k+1,m}\rangle\bigr).
\end{align*}
The remaining pair of mixed products is eliminated by adjointness:
\begin{align*}
&(n+1)\|W_{k+1,m-1}\|^2
 -\langle W_{k+1,m-1},W_{k+2,m-1}\rangle\\
&\quad=\langle W_{k+1,m-1},D_1W_{k+1,m}\rangle\\
&\quad=\langle U_1W_{k+1,m-1},W_{k+1,m}\rangle\\
&\quad=\|W_{k+1,m}\|^2
 -\langle W_{k,m},W_{k+1,m}\rangle.
\end{align*}
Consequently,
\begin{align*}
&(m+1)\|W_{k,m+1}\|^2-m\|W_{k+1,m}\|^2\\
&\qquad=(n+1)^2\bigl(\|W_{k,m}\|^2
             +\|W_{k+1,m-1}\|^2\bigr)
       -(2n+1)\|W_{k+1,m}\|^2.
\end{align*}
Rearranging and using $2n+1-m=n+k+1$ gives
\begin{align*}
&(n+1)^2\bigl(\|W_{k,m}\|^2+\|W_{k+1,m-1}\|^2\bigr)\\
&\qquad=(m+1)\|W_{k,m+1}\|^2
 +(n+k+1)\|W_{k+1,m}\|^2.
\end{align*}
By \eqref{eq:aW}, this is exactly \eqref{eq:rec2}.
\end{proof}

\printbibliography

\end{document}